\newtheorem{df}{Definition}[section]
\newtheorem{thm}[df]{Theorem}
\newtheorem{rema}[df] {Remark}
\begin{document}
\setcounter{page}{1}

\title[On Cartan Joint Spectra]{On Cartan Joint Spectra}

\author{Enrico Boasso}

\begin{abstract} In this work several results regarding the Cartan version
of the Taylor, the S\l odkowski, the Fredholm, the split and the
Fredholm split joint spectra will be studied.\par
\vskip.2cm\noindent KEYWORDS: Joint spectra, representations of
solvable Lie algebras.\par \vskip.2cm \noindent MSC (2000): Primary
47A13; Secondary 47A53, 47A80, 46M05, 17B30, 17B55.
\end{abstract}

\maketitle
\section{ Introduction}

\indent The main concern of the present work is noncommutative
spectral theory within the Lie algebra framework. During the last
years several joint spectra have been extended from commuting tuples
of operators to representations of complex solvable finite
dimensional Lie algebras in Banach spaces. Some of the spectra that
have been considered are the Taylor, the S\l odkowski, the Fredholm,
the split and the Fredholm split joint spectra, see for example the
works [12], [5], [2], [16], [17] and [3]. As regard these spectra in
the commuting case, see for instance [19], [18], [11], [14] and [9].
\par

\indent The most important difference between the nilpotent and the
solvable Lie algebra setting consists in the so-called projection
property. In fact, this property holds only for ideals in the
solvable Lie algebra case, while it also holds for subalgebras when
nilpotent Lie algebras are considered, see [12], [5], [2] and [17].
The problem of introducing a well-behaved joint spectrum in the
solvable Lie algebra framework which extends the Taylor joint
spectrum and at the same time preserves the projection property for
subalgebras was solved in [1]. Moreover, this new version of the
Taylor joint spectrum coincides with the one in [12], [5] and [16]
for nilpotent Lie algebras, but it differs in general from the
spectrum in these articles for solvable non nilpotent Lie
subalgebras of operators, see [1] and [5]. Since in order to define
the joint spectrum introduced in [1] Cartan subalgebras were
considered, it was suggested that this spectrum should be called the
Cartan-Taylor joint spectrum.\par

\indent Furthermore, in [8] the Cartan-S\l odkowski joint spectra of
a representation of a complex solvable finite dimensional Lie
algebra in a Banach space were introduced. The joint spectra of [8]
consist in the Cartan version of the S\l odkowski joint spectra and
they provide a new extension of the spectra of [18] from commutative
tuples of operators to representations of solvable Lie algebras in
Banach spaces. In fact, as for the spectrum of [1], the joint
spectra under consideration coincide with the ones of [2] and [17,
2.11] in the nilpotent Lie algebra case, but they differ in general
from these spectra in the solvable non nilpotent Lie algebra
setting, see [5] and [8]. Furthermore, as for the Cartan-Taylor
joint spectrum, the Cartan-S\l odkowski joint spectra have  the
projection property for subalgebras even in the solvable
setting.\par

\indent The main concern of the present work consists in the study
of several spectra related to the Cartan-Taylor and the Cartan-S\l
odkowski joint spectra. In fact, in section 2 the Cartan version of
the Fredholm, the split and the Fredholm split joint spectra will be
introduced and their main spectral properties will be proved. In
addition, the relationships among these joint spectra will be
considered. Furthermore, in section 3 all the aforesaid joint
spectra of a solvable Lie algebra representation by compact
operators in a Banach space will be described. Moreover, the
behavior of all the above-mentioned joint spectra with respect to
the procedure of passing from two representations of complex
solvable finite dimensional Lie algebras in Banach spaces to the
tensor product and the multiplication representation of the direct
sum of the algebras will be studied. All these results will provide
the Cartan version of several properties considered in [3] and
[4].\par

\indent The results of [3] were communicated in a talk given during
the 20th International Conference on Operator Theory held in
Timisoara, Rumania, from 30th June to 5th July 2004. It is a
pleasure for the author to acknowledge the stimulating atmosphere
and the warm hospitality of the Conference. Moreover, the author
wish to express his fully indebtedness to the organizers of the
Conference, specially to Luminita Stafi, Dan Timotin and
Florian-Horia Vasilescu.\par

 \section{Spectra related to the Cartan-Taylor joint spectrum}

\indent In this section the Cartan version of several joint spectra
will be introduced. First of all, some notation is needed.
\par

\indent From now on, if $X$ and $Y$ are two Banach spaces, then
$L(X,Y)$ denotes the algebra of all linear and continuous operators
defined on $X$ with values in $Y$, and $K(X,Y)$ the closed ideal of
all compact operators of $L(X,Y)$. As usual, when $X=Y$, $L(X,X)$
and $K(X,X)$ are denoted by $L(X)$ and $K(X)$ respectively. \par

\indent Consider a Banach space $X$, a complex solvable finite
dimensional Lie algebra $L$, and a representation $\rho\colon L\to
L(X)$. As regard the definitions and the main properties of the
Taylor, the S\l odkowski, the Fredholm, the split and the Fredholm
split joint spectra of the representation $\rho$, see [12], [5],
[2], [16], [17] and [3]. In the commutative framework, see [19],
[18], [11], [14] and  [9].\par

\indent The following remark will be central in the definition of
the Cartan spectra.\par

\begin{rema} \rm Let $L$ be a complex solvable
finite dimensional Lie algebra and $H$ a nilpotent subalgebra of
$L$. Consider the representation
$$
ad_L\colon H\colon \to End(L),\hskip1cm ad_L(h)(l)=[h,l],
$$
where $h$ and $l$ belong to $H$ and $L$ respectively. Then,
according to Theorem 1 of [6], there are finite linear functionals
defined on $H$, $\alpha_i$, $i=1,\ldots ,m$, such that
$$
L=\bigoplus_{i=1,\ldots ,m} L^{\alpha_i},
$$
where $L^{\alpha_i}=\{ l\in L\colon \hbox{ there is }q\in\Bbb
N\hbox{ such that } (ad_L(h)-\alpha_i(h))^q$ $(l)=0,\forall\hbox{ }
h\in H\}$. Furthermore, $L^{\alpha_i}$ are vector subspaces of $L$
such that $ad_L(L^{\alpha_i})\subseteq L^{\alpha_i}$, and
$L^{\alpha_i}\neq 0$, $i=1,\ldots ,m$. \par \indent Now well, since
$H$ is a nilpotent subalgebra of $L$, $H\subseteq L^0$, where $L^0$
denotes the subspaces defined by the null lineal functional of $H$.
A nilpotent subalgebra is said a \it{Cartan subalgebra} \rm of $L$
if $H=L^0$. If $L$ is a complex solvable finite dimensional Lie
algebra, then $L$ has at least one Cartan subalgebra, see Theorem 2
of [6] or Theorem 1, Chapter III, of [13].\par \indent As in [1] and
in [8], given $H$ a Cartan subalgebra of $L$, consider the Cartan
decomposition of $L$ determined by $H$, that is
$$
L= H\oplus H_*,
$$
where $H_*$ is the direct sum of all the subspaces $L^{\alpha_i}$,
$i\in [\![1,m]\!]$, such that $\alpha_i\neq 0$.\par \indent On the
other hand, note that if $L_1$ and $L_2$ are two complex solvable
finite dimensional Lie algebras, and if $H_1$ and $H_2$ are two
Cartan subalgebras of $L_1$ and $L_2$ respectively, then a
straightforward calculations shows that $H=H_1\oplus H_2$ is a
Cartan subalgebra of $L=L_1\times L_2$, the direct sum of $L_1$ and
$L_2$. Furthermore, $L$ can be decomposed as
$$
L= H\oplus H_*= (H_1\oplus H_2)\oplus (H_{1*}\oplus H_{2*}),
$$
where $H_*=H_{1*}\oplus H_{2*}$ and
$$
L_1=H_1\oplus H_{1*},\hskip1cm L_2=H_1\oplus H_{2*}.
$$
\end{rema}
\indent Next the definitions of the Cartan-Taylor and the Cartan-S\l
odkowski joint spectra will be reviewed, see [1] and [8]. Recall
that if $\rho\colon L\to L(X)$ is a representation of the Lie
algebra $L$ in the Banach space $X$, then $\sigma (\rho)$,
$\sigma_{\delta ,k}(\rho)$ and $\sigma_{\pi ,k}(\rho)$ denote
respectively the Taylor and the S\l odkowski joint spectra of
$\rho$, see [12], [5], [2], [16] and [17].\par

\begin{df} Let $X$ be a complex Banach space, $L$ a
complex solvable finite dimensional Lie algebra, $\rho\colon L\to
L(X)$ a representation of $L$ in $X$, and $H$ a Cartan
subalgebra of $L$. Then, the Cartan-Taylor joint spectrum of $\rho$
is the set
$$
\Sigma(\rho)= \{f\in L^*: \hbox{
}f\mid_H\in\sigma(\rho\mid_H),\hbox{ and f vanishes on }  H_*\}.
$$
\indent In addition, the $k$-th $\delta$-Cartan-S\l odkowski joint
spectrum of $\rho$ is the set
$$
\Sigma_{\delta ,k}(\rho)=  \{f\in L^*: \hbox{
}f\mid_H\in\sigma_{\delta ,k}(\rho\mid_H),\hbox{ and f vanishes on }
H_*\},
$$
and the $k$-th $\pi$-Cartan-S\l odkowski joint spectrum of $\rho$ is
the set
$$
\Sigma_{\pi ,k}(\rho)= \{f\in L^*: \hbox{  }f\mid_H\in\sigma_{\pi
,k}(\rho\mid_H),\hbox{ and f vanishes on }  H_*\},
$$
where $\rho\mid_H\colon H\to L(X)$ is the restriction of $\rho$ to
the subalgebra $H$, $L=H\oplus H_*$ is the Cartan decomposition of
$L$, and $k=0,\ldots , n=\dim L$. \par \indent Observe that
$$
\Sigma_{\delta ,n}(\rho)= \Sigma_{\pi ,n} (\rho)= \Sigma(\rho).
$$
\end{df}

\begin{rema}\rm First of all, note that according
to [17, 2.4.4] and to [17, 0.5.8], the definition of the Cartan-S\l
odkowski joint spectra given above coincides with the one introduced
in section 2 of [8].\par \indent Next, according to Lemma 2 of [8],
the definition of each Cartan-S\l odkowski joint spectrum is
independent of the particular Cartan subalgebra of $L$ used to
define it. In addition, according to Lemma 1 of [8], each Cartan-S\l
odkowski joint spectrum is contained in the sets of characters of
$L$. As regard the Cartan-Taylor joint spectrum, see Theorems 2.1
and 2.6 of [1].\par \indent Furthermore, according to Theorem 1 of
[8], each Cartan-S\l odkowski joint spectrum has the projection
property for Lie subalgebras of $L$, that is, if $E$ is a subalgebra
of $L$ and if $\pi\colon L^*\to E^*$ denotes the restriction map,
then
$$
\pi(\Sigma_*(\rho))=\Sigma_*(\rho\mid_E),
$$
where $\Sigma_*$ denotes one of the joint spectra in Definition 2.2
and $\rho\mid_E\colon E\to L(X)$ is the restriction of the
representation $\rho$ to the subalgebra $E$. As regard the
Cartan-Taylor spectrum, see Theorem 2.6 of [1].\par \indent What is
more, since the S\l odkowski joint spectra are compact subsets of
the set of characters of $L$, see [24, 2.11.3] or Theorems 4 and 7
of [2], a straightforward calculation shows that each Cartan-S\l
odkowski joint spectrum is a compact subset of the characters of
$L$.\par \indent On the other hand, according to Theorem 2.6 of [1]
and to the example that follows Definition 1 of [5], if $L$ is a
solvable not nilpotent Lie algebra, then $\Sigma_{\delta ,k}(\rho)$
(resp. $\Sigma_{\pi ,k}(\rho))$ differs in general from
$\sigma_{\delta ,k}(\rho)$ (resp. $\sigma_{\pi ,k}(\rho)$),
$k=0,\ldots ,n$.\par \indent However, when $L$ is a nilpotent Lie
algebra
$$
\Sigma_{\delta ,k}(\rho)=\sigma_{\delta ,k}(\rho),\hskip1cm
\Sigma_{\pi ,k}(\rho)=\sigma_{\pi ,k}(\rho),
$$
$k=0,\ldots ,n$. In fact, in this case, since $H=L$ is a Cartan
subalgebra of $L$, see [6] or Chapter 3 of [13], $H_*=0$, and
consequently the equalities are true.\par
\end{rema}
\indent Next the Cartan version of the Fredholm, the split and the
Fredholm split joint spectra of a solvable Lie algebra
representation in a Banach space will be introduced. Recall that if
$\rho\colon L\to L(X)$ is a representation of the Lie algebra $L$ in
the Banach space $X$, then $\sigma_e (\rho)$, $\sigma_{\delta
,k,e}(\rho)$ and $\sigma_{\pi ,k,e}(\rho)$ denote the Fredholm joint
spectra of $\rho$, see [3], $sp(\rho)$, $sp_{\delta , k}(\rho)$ and
$sp_{\pi, k}(\rho)$ denote the split joint spectra of $\rho$, see
[17] and [3], and $sp_e(\rho)$, $sp_{\delta , k, e}(\rho)$ and
$sp_{\pi, k, e}(\rho)$ denote the Fredholm split joint spectra of
$\rho$, see [3].\par

\begin{df} Let $X$, $L$, $\rho\colon L\to L(X)$, and $H$ be as in Definition 2.2. Then, the Cartan-Fredholm
or Cartan-essential Taylor joint spectrum of $\rho$ is the set
$$
\Sigma_e(\rho)=\{ f\in L^*\colon f\mid_H\in\sigma_e(\rho\mid_H),
\hbox{ and f vanishes on }  H_*\}.
$$
\indent In addition, the $k$-th Cartan-Fredholm or Cartan-essential
$\delta$-S\l odkowski joint spectra of $\rho$ is the set
$$
\Sigma_{\delta , k ,e}(\rho)=\{ f\in L^*\colon
f\mid_H\in\sigma_{\delta,k,e}(\rho\mid_H), \hbox{ and f vanishes on
}  H_*\},
$$
and the $k$-th Cartan-Fredholm or Cartan-essential $\pi$-S\l
odkowski joint spectrum of $\rho$ is the set
$$
\Sigma_{\pi ,k ,e}(\rho)=\{ f\in L^*\colon
f\mid_H\in\sigma_{\pi,k,e}(\rho\mid_H), \hbox{ and f vanishes on }
H_*\},
$$
where $k=0,\ldots , n=\dim L$, and $\rho\mid_H\colon H\to L(X)$ is
the restriction of the representation $\rho$ to $H$.\par \indent
Observe that $\Sigma_e(\rho)=\Sigma_{\delta ,n ,e}(\rho)=
\Sigma_{\pi ,n ,e}(\rho)$.\par

\end{df}

\begin{df}Let $X$, $L$, $\rho\colon L\to L(X)$, and $H$ be as in Definition 2.2. Then, the Cartan-split
spectrum of $\rho$ is the set
$$
Sp(\rho)=\{ f\in L^*\colon f\mid_H\in sp(\rho\mid_H),\hbox{ and f
vanishes on }  H_*\}.
$$
\indent In addition, the $k$-th Cartan $\delta$-split joint spectrum
of $\rho$ is the set
$$
Sp_{\delta ,k}(\rho)= \{ f\in L^*\colon f\mid_H\in
sp_{\delta,k}(\rho\mid_H),\hbox{ and f vanishes on }  H_*\} ,
$$
and the $k$-th Cartan $\pi$-split joint spectrum of $\rho$ is the
set
$$
Sp_{\pi ,k}(\rho)= \{ f\in L^*\colon f\mid_H\in
sp_{\pi,k}(\rho\mid_H),\hbox{ and f vanishes on }  H_*\},
$$
where $k=0,\ldots , n = \dim L$, and $\rho\mid_H\colon H\to L(X)$ is
the restriction of the representation $\rho$ to $H$.\par \indent
Observe that  $Sp_{\delta ,n}(\rho)= Sp_{\pi ,n}(\rho)=
Sp(\rho)$.\par
\end{df}

\begin{df}Let $X$, $L$, $\rho\colon L\to L(X)$, and $H$ be as in Definition 2.2. Then, the Cartan-Fredholm
or Cartan-essential split spectrum of $\rho$ is the set
$$
Sp_e(\rho)=\{ f\in L^*\colon f\mid_H\in sp_e(\rho\mid_H),\hbox{ and
f vanishes on }  H_*\}.
$$
\indent In addition, the $k$-th Cartan-Fredholm or Cartan-essential
$\delta$-split joint spectrum of $\rho$ is the set
$$
Sp_{\delta ,k,e}(\rho)= \{ f\in L^*\colon f\mid_H\in
sp_{\delta,k,e}(\rho\mid_H), \hbox{ and f vanishes on }  H_*\} ,
$$
and the $k$-th Cartan-Fredholm or Cartan-essential $\pi$-split joint
spectrum of $\rho$ is the set
$$
Sp_{\pi ,k,e}(\rho)= \{ f\in L^*\colon f\mid_H\in
sp_{\pi,k,e}(\rho\mid_H), \hbox{ and f vanishes on }  H_*\},
$$
where $k=0,\ldots , n =\dim L$, and $\rho\mid_H\colon H\to L(X)$ is
the restriction of the representation $\rho$ to $H$.\par \indent
Observe that   $Sp_{\delta ,n,e}(\rho)= Sp_{\pi ,n,e}(\rho)=
Sp_e(\rho)$.\par
\end{df}

\indent In what follows the properties of the sets introduced in
definitions 2.4, 2.5 and 2.6 will be studied. In order to present
the relationships among these sets and the Cartan-Taylor and the
Cartan-S\l odkowski joint spectra, some definitions will be
reviewed. \par

\indent As before, consider a complex solvable finite dimensional
Lie algebra $L$, a complex Banach space $X$, and $\rho\colon L\to
L(X)$ a representation of $L$ in $X$. Then, if $C(X)=L(X)/K(X)$,
$L_{\rho}\colon L\to L(L(X))$ and $\tilde{L}_{\rho}\colon L\to
L(C(X))$ are the representations induced by left multiplication of
operators in $L(X)$ and $C(X)$ respectively, which were studied in
section 3 of [3] and [17].\par

\indent On the other hand, $L^{op}$ will denote the opposite algebra
of $L$, that is the algebra that coincides with $L$ as vector space
and that has the bracket $[x,y]^{op}=-[x,y]=[y,x]$, where $x$ and
$y$ belong to $L^{op}=L$ and $[.,.]$ is the Lie bracket of $L$.
Then, if $X{'}$ is the dual space of $X$, $\rho^*\colon L\to
L(X{'})$ is the adjoint representation of $\rho$, see [2] and [17].
In addition, $R_{\rho}\colon L^{op}\to L(L(X))$ and
$\tilde{R}_{\rho}\colon L^{op}\to L(C(X))$ are the representations
induced by right multiplication of operators in $L(X)$ and $C(X)$
respectively, which were studied in [3] and [17].\par

\indent Next denote by $l^{\infty}(X)$ the Banach space of all
bounded sequences of elements of $X$ with the supremum norm. Let
$m(X)$ be the set of all sequences $(x_n)_{n\in \Bbb N}$ such that
the closure of the set $\{x_n\colon n\in \Bbb N\}$ is compact. Then
$m(X)$  is a closed subspace of $l^{\infty}(X)$. Denote
$\tilde{X}=l^{\infty}(X)/m(X)$.\par

\indent If $X$ and $Y$ are Banach spaces, and if $T\in L(X,Y)$, then
$T$ defines pointwise an operator $T^{\infty}\colon l^{\infty}(X)\to
l^{\infty}(Y)$ by $T^{\infty}((x_n)_{n\in \Bbb N})=(T(x_n))_{n\in
\Bbb N}$. It is clear that $T^{\infty}(m(X))\subseteq m(Y)$. Denote
by $\tilde{T}\colon \tilde{X}\to \tilde{Y}$ the operator induced by
$T^{\infty}$.\par

\indent Now well, consider $X$, $L$ and $\rho\colon L\to L(X)$ as
before, and define the map
$$
\tilde{\rho}\colon L\to L(\tilde{X}),\hskip1cm
\tilde{\rho}(l)=\widetilde{\rho(l)},
$$
where $l\in L$. According to Theorem 5 of [15], $\tilde{\rho}\colon
L\to L(\tilde{X})$ is a representation of $L$ in $\tilde{X}$, see
also [7].\par

\begin{thm} Let $X$ be a complex Banach space, $L$ a
complex solvable finite dimensional Lie algebra, and $\rho\colon
L\to L(X)$ a representation of $L$ in $X$. Then,\par
\hskip.2cm i) $\Sigma_{\pi,k}(\rho)=\Sigma_{\delta,k}(\rho^*)$,
$\Sigma_{\delta,k}(\rho)=\Sigma_{\pi,k}(\rho^*)$,
$\Sigma(\rho)=\Sigma(\rho^*)$,\par \hskip.2cm ii)
$\Sigma_{\pi,k,e}(\rho)=\Sigma_{\delta,k,e}(\rho^*)$,
$\Sigma_{\delta,k,e}(\rho)=\Sigma_{\pi,k,e}(\rho^*)$,
$\Sigma_e(\rho)=\Sigma_e(\rho^*)$,\par \hskip.2cm iii)
$\Sigma_{\delta,k,e}(\rho)=\Sigma_{\delta,k}(\tilde{\rho})$,
$\Sigma_{\pi,k,e}(\rho)=\Sigma_{\pi,k}(\tilde{\rho})$,
$\Sigma_e(\rho)=\Sigma(\tilde{\rho})$,\par \hskip.2cm iv)
$\Sigma_{\pi,k,e}(\rho)= \Sigma_{ \delta,k}((\tilde{\rho})^*)$,
$\Sigma_e(\rho)=\Sigma((\tilde{\rho})^*)$,\par \hskip.2cm v)
$Sp_{\delta,k}(\rho)=\Sigma_{\delta ,k}(L_{\rho})$,
$Sp_{\pi,k}(\rho)=\Sigma_{\delta ,k}(R_{\rho})$,
$Sp(\rho)=\Sigma(L_{\rho})=$\par \hskip.6cm $\Sigma(R_{\rho})$,\par
\hskip.2cm vi) $Sp_{\delta ,k,e}(\rho)=\Sigma_{\delta
,k}(\tilde{L}_{\rho})$, $Sp_{\pi ,k,e}(\rho)=\Sigma_{\delta
,k}(\tilde{R}_{\rho})$, $Sp_e(\rho)=\Sigma(\tilde{L}_{\rho})=$\par
\hskip.6cm $\Sigma(\tilde{R}_{\rho})$,\par\par where $k=0,\ldots
,n=$ dim $L$.
\end{thm}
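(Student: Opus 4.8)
The plan is to reduce each of the identities in i)--vi) to the corresponding identity for the restricted representation $\rho\mid_H$ of the nilpotent Lie algebra $H$, where $H$ is a fixed Cartan subalgebra of $L$ and $L=H\oplus H_*$ is the associated Cartan decomposition. The point is that, by Definitions 2.2, 2.4, 2.5 and 2.6, every set occurring in the statement has the common shape
$$
\Sigma_\ast(\rho)=\{f\in L^*\colon f\mid_H\in\sigma_\ast(\rho\mid_H),\hbox{ and } f\hbox{ vanishes on }H_*\},
$$
where $\sigma_\ast$ is the appropriate non-Cartan joint spectrum, defined on the nilpotent Lie algebra $H$. Consequently, to establish an equality between two such Cartan spectra it suffices to check, first, that the subspace $H_*$ (hence the condition ``$f$ vanishes on $H_*$'') is the same on both sides, and, second, that the two non-Cartan spectra of the restricted representations coincide as subsets of the characters of $H$.

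For the first point one uses Remark 2.1 together with two elementary observations. A subalgebra is a Cartan subalgebra of $L$ if and only if it is one of $L^{op}$, and in that case the decompositions $L=H\oplus H_*$ and $L^{op}=H\oplus H_*$ coincide: nilpotency of $H$ is unchanged when the bracket is replaced by its opposite, and since $ad_{L^{op}}=-ad_L$ the generalized weight spaces of $ad_{L^{op}}$ are those of $ad_L$ with each weight $\alpha_i$ replaced by $-\alpha_i$, so the zero weight space, and therefore $H$ and $H_*$, are the same. This takes care of the opposite-algebra representations $\rho^*$, $R_\rho$ and $\tilde R_\rho$ appearing in the statement. Secondly, each of the auxiliary assignments $\rho\mapsto\rho^*$, $\rho\mapsto\tilde\rho$, $\rho\mapsto L_\rho$, $\rho\mapsto R_\rho$, $\rho\mapsto\tilde L_\rho$ and $\rho\mapsto\tilde R_\rho$ is defined value by value in $l\in L$ --- for instance $\tilde\rho(l)=\widetilde{\rho(l)}$ and $L_\rho(l)$ is left multiplication by $\rho(l)$ --- so restriction to $H$ commutes with all of them: $(\rho^*)\mid_H=(\rho\mid_H)^*$, $\tilde\rho\mid_H=\widetilde{\rho\mid_H}$, $L_\rho\mid_H=L_{\rho\mid_H}$, and likewise for the remaining three. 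The two displayed facts together reduce i)--vi) to their nilpotent counterparts for $\rho\mid_H$.

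Those counterparts are the known results cited in the excerpt. Part i) follows from the self-duality of the Taylor and S\l odkowski joint spectra of a nilpotent Lie algebra representation $\tau$, namely $\sigma(\tau)=\sigma(\tau^*)$ and $\sigma_{\pi,k}(\tau)=\sigma_{\delta,k}(\tau^*)$ (see [2] and [17]); part ii) from the analogous duality for the Fredholm joint spectra (see [3]); part iii) from $\sigma_e(\tau)=\sigma(\tilde\tau)$, $\sigma_{\delta,k,e}(\tau)=\sigma_{\delta,k}(\tilde\tau)$ and $\sigma_{\pi,k,e}(\tau)=\sigma_{\pi,k}(\tilde\tau)$ (see [3]); part v) from $sp(\tau)=\sigma(L_\tau)=\sigma(R_\tau)$, $sp_{\delta,k}(\tau)=\sigma_{\delta,k}(L_\tau)$ and $sp_{\pi,k}(\tau)=\sigma_{\delta,k}(R_\tau)$ (see [17] and [3]); and part vi) from the corresponding identities for the Fredholm split spectra, $sp_e(\tau)=\sigma(\tilde L_\tau)=\sigma(\tilde R_\tau)$, $sp_{\delta,k,e}(\tau)=\sigma_{\delta,k}(\tilde L_\tau)$ and $sp_{\pi,k,e}(\tau)=\sigma_{\delta,k}(\tilde R_\tau)$ (see [3]). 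Finally, part iv) is obtained by applying part i) with $\rho$ replaced by $\tilde\rho$ to the identities of part iii); here one also uses the chain $(\tilde\rho)^*\mid_H=(\widetilde{\rho\mid_H})^*=\widetilde{(\rho\mid_H)^*}$, which is once more the commutation of restriction with the tilde and adjoint constructions.

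The step I expect to demand the most care is not mathematical but a matter of bookkeeping: one must confirm that the index range $k=0,\ldots,n=\dim L$ used in Definitions 2.2, 2.4, 2.5 and 2.6 is consistent with the convention for the non-Cartan spectra on $H$ employed in the cited nilpotent results (recall that $\dim H$ may be strictly smaller than $n$), and that, when $\rho^*$, $\tilde\rho$, $L_\rho$, $R_\rho$, $\tilde L_\rho$ or $\tilde R_\rho$ is plugged into Definition 2.2, the Cartan decomposition used is precisely the one inherited from $H$ (of $L$, respectively of $L^{op}$), so that no genuinely different Cartan subalgebra ever enters. Once this is pinned down, as in [1] and [8], all of i)--vi) fall out of the reduction above.
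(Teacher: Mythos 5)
Your proposal is correct and follows essentially the same route as the paper: the paper's proof is simply a citation of Definitions 2.2 and 2.4--2.6 together with the corresponding non-Cartan identities for the restricted representations (Theorem 7 of [2], [17, 2.11.4], Theorems 4 and 8 of [3], [17, 3.1.5], [17, 3.1.7], Proposition 3.1 of [7], and [17, 0.5.8]), which is exactly the reduction you carry out explicitly. Your added checks --- that $H$ and $H_*$ are unchanged when passing to $L^{op}$, and that restriction to $H$ commutes with the adjoint, tilde, and multiplication constructions --- make precise the bookkeeping the paper leaves implicit.
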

\begin{proof} It is a consequence of
Definitions 2.2, 2.4, 2.5 and 2.6, Theorem 7 of [2], [17, 2.11.4],
Theorem 4 of [3], [17, 3.1.5], [17, 3.1.7], Theorem 8 of [3],
Proposition 3.1 of [7], and [17, 0.5.8].\end{proof}

\begin{thm} Let $X$ be a complex Banach space, $L$ a
complex solvable finite dimensional Lie algebra, and $\rho\colon
L\to L(X)$ a representation of $L$ in $X$. Then,\par
\hskip.2cm i) the sets $\Sigma_e(\rho)$,
$\Sigma_{\delta,k,e}(\rho)$, $\Sigma_{\pi,k,e}(\rho)$, $Sp(\rho)$,
$Sp_{\delta,k}(\rho)$, $Sp_{\pi,k}(\rho)$, $Sp_e(\rho)$,
$Sp_{\delta,k,e}(\rho)$, and $Sp_{\pi,k,e}(\rho)$ are independent of
the particular Cartan subalgebra of $L$ used to define them,\par
\hskip.2cm ii) all the sets considered in i) are compact nonempty
subsets of the character of $L$ that have the projection property
for Lie subalgebras of $L$.
\end{thm}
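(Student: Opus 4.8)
The plan is to reduce everything to Theorem 2.7 together with the properties of the Cartan--Taylor and the Cartan--S\l odkowski joint spectra recalled in Remark 2.3. By parts iii), v) and vi) of Theorem 2.7, each of the nine families of sets occurring in i) coincides --- for a Cartan subalgebra $H$ of $L$ used simultaneously in both sides --- with a Cartan--S\l odkowski or a Cartan--Taylor joint spectrum of one of the auxiliary representations $\tilde\rho$, $L_\rho$, $R_\rho$, $\tilde L_\rho$, $\tilde R_\rho$; for instance
$$
\Sigma_{\delta,k,e}(\rho)=\Sigma_{\delta,k}(\tilde\rho),\quad \Sigma_e(\rho)=\Sigma(\tilde\rho),\quad Sp_{\delta,k}(\rho)=\Sigma_{\delta,k}(L_\rho),\quad Sp(\rho)=\Sigma(L_\rho),
$$
and analogously for the remaining ones, where $\tilde\rho$, $L_\rho$, $\tilde L_\rho$ are representations of $L$ and $R_\rho$, $\tilde R_\rho$ are representations of $L^{op}$, all in nonzero Banach spaces ($\tilde X$, $L(X)$ and $C(X)$). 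Now $L^{op}$ is again a complex solvable finite dimensional Lie algebra, $[L^{op},L^{op}]=[L,L]$ so that $L^{op}$ has the same characters as $L$, and the restriction map $L^*\to E^*$ associated with a subalgebra $E$ of $L$ is literally the restriction map associated with $E^{op}\subseteq L^{op}$. Hence it suffices to establish i) and ii) for the Cartan--S\l odkowski and the Cartan--Taylor joint spectra of an arbitrary representation of such an algebra and then transport the conclusions through the identities of Theorem 2.7.

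For i): the auxiliary representations $\tilde\rho$, $L_\rho$, $R_\rho$, $\tilde L_\rho$, $\tilde R_\rho$ are built from $\rho$ without any reference to a Cartan subalgebra of $L$, so, writing a superscript for the Cartan subalgebra used in the definition, Theorem 2.7 gives $\Sigma^{H}_{\delta,k,e}(\rho)=\Sigma^{H}_{\delta,k}(\tilde\rho)$ and $\Sigma^{H'}_{\delta,k,e}(\rho)=\Sigma^{H'}_{\delta,k}(\tilde\rho)$ for any two Cartan subalgebras $H$, $H'$, while the two right-hand sides coincide because the Cartan--S\l odkowski joint spectra are independent of the Cartan subalgebra (Remark 2.3; Lemma 2 of [8]), and likewise, using Theorem 2.6 of [1] for the Cartan--Taylor spectrum, for all the remaining families. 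For the compactness and the character statements of ii) one invokes Remark 2.3 and [1] directly --- the Cartan--S\l odkowski and the Cartan--Taylor joint spectra of a representation of a solvable Lie algebra are compact subsets of its set of characters --- and then transports through Theorem 2.7, using that the characters of $L^{op}$ are those of $L$. Nonemptiness is immediate: from the definitions, the set $\Sigma_{\delta,k}(\sigma)$ (resp. $\Sigma_{\pi,k}(\sigma)$, $\Sigma(\sigma)$) is nonempty as soon as $\sigma_{\delta,k}(\sigma\mid_H)$ (resp. $\sigma_{\pi,k}(\sigma\mid_H)$, $\sigma(\sigma\mid_H)$) is, since every character of $H$ extends by $0$ on $H_*$ to an element of $L^*$; and the latter spectra, being S\l odkowski or Taylor spectra of the nilpotent (hence solvable) Lie algebra $H$ acting on a nonzero Banach space, are nonempty. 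Taking $\sigma$ to be $\tilde\rho$, $L_\rho$, $\dots$ settles nonemptiness for all the sets in i).

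The one point demanding a genuine check is the projection property. Fix a subalgebra $E$ of $L$ and let $\pi\colon L^*\to E^*$ be the restriction map. Since each auxiliary construction is defined operator by operator from the values $\rho(l)$, one verifies from the definitions that $\widetilde{(\rho\mid_E)}=\tilde\rho\mid_E$, $L_{\rho\mid_E}=L_\rho\mid_E$, $\tilde L_{\rho\mid_E}=\tilde L_\rho\mid_E$ and, viewing $E^{op}$ as a subalgebra of $L^{op}$, $R_{\rho\mid_E}=R_\rho\mid_{E^{op}}$, $\tilde R_{\rho\mid_E}=\tilde R_\rho\mid_{E^{op}}$. Then, for example,
$$
\pi\bigl(Sp_{\delta,k}(\rho)\bigr)=\pi\bigl(\Sigma_{\delta,k}(L_\rho)\bigr)=\Sigma_{\delta,k}\bigl(L_\rho\mid_E\bigr)=\Sigma_{\delta,k}\bigl(L_{\rho\mid_E}\bigr)=Sp_{\delta,k}(\rho\mid_E),
$$
the outer equalities being Theorem 2.7 for $\rho$ and for $\rho\mid_E$, the second one the projection property of the Cartan--S\l odkowski joint spectra (Remark 2.3; Theorem 1 of [8]) applied to the representation $L_\rho$ of $L$, and the third the compatibility just recorded; the remaining eight families are treated the same way, using the projection property of the Cartan--Taylor spectrum (Theorem 2.6 of [1]) for $Sp(\rho)$, $Sp_e(\rho)$ and $\Sigma_e(\rho)$, and replacing $L$ by $L^{op}$ in the right-multiplication cases. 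I expect the main --- though still routine --- obstacle to be exactly this bookkeeping with the opposite algebra: one must keep track of the fact that a subspace is a subalgebra of $L$ precisely when it is one of $L^{op}$, and that independence of the Cartan subalgebra, compactness, containment in the characters, and the projection property, all hold for $L^{op}$ in its own right as a solvable Lie algebra exactly as for $L$; granting this, the argument is purely formal.
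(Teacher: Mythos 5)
Your proposal is correct and follows essentially the same route as the paper: the paper's proof is precisely "a consequence of Theorem 2.7 and Lemma 1, Lemma 2 and Theorem 1 of [8], see also Proposition 2.1 and Theorem 2.6 of [1]," i.e., reduce each set to a Cartan--Taylor or Cartan--S\l odkowski joint spectrum of one of the auxiliary representations and transport the known properties. You have merely made explicit the bookkeeping (compatibility of the auxiliary constructions with restriction to subalgebras, and the passage to $L^{op}$) that the paper leaves implicit.
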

\begin{proof} It is a consequence of Theorem
2.7 and Lemma 1, Lemma 2 and Theorem 1 of [8], see also Proposition
2.1 and Theorem 2.6 of [1].\end{proof}

\begin{rema}\rm In the above conditions, it is clear that

\begin{align*} &\Sigma_{\delta ,k}(\rho)\subseteq Sp_{\delta ,k}(\rho),
\hskip.5cm\Sigma_{\pi ,k}(\rho)\subseteq
Sp_{\pi ,k}(\rho), \hskip.5cm\Sigma(\rho)\subseteq Sp(\rho),\\
&\Sigma_{\delta ,k, e}(\rho)\subseteq Sp_{\delta ,k,
e}(\rho),\hskip.1cm \Sigma_{\pi ,k,e}(\rho)\subseteq Sp_{\pi
,k,e}(\rho),\hskip.1cm \Sigma_e(\rho)\subseteq
Sp_e(\rho).\\\end{align*}

Furthermore, if $X$ is a Hilbert space, then the above inclusions
are equalities.\par

\indent In addition, according to Theorem 2.6 of [1] and to the
example that follows Definition 1 of [5], note that if $L$ is a
solvable not nilpotent Lie algebra, then $Sp_{\delta ,k}(\rho)$
(resp. $Sp_{\pi ,k}(\rho))$ differs in general from $sp_{\delta
,k}(\rho)$ (resp. $sp_{\pi ,k}(\rho)$), $k=0,\ldots ,n$.\par \indent
However, when $L$ is a nilpotent Lie algebra\par \hskip.2cm i)
$\Sigma_{\delta ,k,e}(\rho)=\sigma_{\delta ,k,e}(\rho)$,
$\Sigma_{\pi ,k,e}(\rho)=\sigma_{\pi ,k,e}(\rho)$,\par \hskip.2cm
ii) $Sp_{\delta ,k}(\rho)=sp_{\delta ,k}(\rho)$, $Sp_{\pi
,k}(\rho)=sp_{\pi ,k}(\rho)$,\par \hskip.2cm iii) $Sp_{\delta
,k,e}(\rho)=sp_{\delta ,k,e}(\rho)$, $Sp_{\pi ,k,e}(\rho)=sp_{\pi
,k,e}(\rho)$,\par where $k=0,\ldots ,n$. In fact, in this case,
since $H=L$ is a Cartan subalgebra of $L$, see [6] or Chapter 3 of
[13], $H_*=0$, and consequently the equalities are true.\par
\end{rema}

\section{Properties of the joint spectra}

 \indent In
this section several results regarding the joint spectra of
Definitions 2.2  and 2.4 - 2.6 will be studied. In first place,
solvable Lie algebra representations by compact operators in Banach
spaces will be considered both in the infinite and finite
dimensional case.\par

\begin{thm} Let $X$ be an infinite dimensional complex
Banach space, $L$ a complex solvable finite dimensional Lie algebra,
and $\rho\colon L\to L(X)$ a representation of $L$ in $X$, such that
$\rho(l)\in K(X)$ for each $l\in L$. In addition, suppose that $H$
is a Cartan subalgebra of $L$. Then, \par \hskip.2cm i) $\Sigma
(\rho)=Sp(\rho)$, $\Sigma_{\delta ,k}(\rho)=Sp_{\delta ,k}(\rho)$
and $\Sigma_{\pi ,k}(\rho)=Sp_{\pi ,k}(\rho)$,\par \hskip.2cm ii)
the sets $\Sigma (\rho)$, $\Sigma_{\delta ,k}(\rho)\cup \{0\}$,
$\Sigma_{\pi ,k}(\rho)\cup \{0\}$, $Sp(\rho)$, $Sp_{\delta
,k}(\rho)\cup \{0\}$ and $Sp_{\pi ,k}(\rho)\cup \{0\}$ coincide with
the set

\begin{align*} \{0\}\cup\{& f\in L^*: f(L^2)=0, \hbox{  such that there
is  }x\in X,\hbox{  }x\neq 0, \\
&\hbox{with the property  }\rho(h)=f(h)x,\hbox{  } \forall\hbox{  }
h\in H\},\\\end{align*}

\hskip.6cm iii)
$\Sigma_e(\rho)=\Sigma_{\delta,k,e}(\rho)=\Sigma_{\pi,k,e}(\rho)=\{0\}$,\par
\hskip.2cm iv) $sp_{e}(\rho)=sp_{\delta ,k ,e}(\rho)=sp_{\pi
,k,e}(\rho)=\{0\}$,\par where $k=0,\ldots , n=$ dim L.\par
\end{thm}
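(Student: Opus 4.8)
The plan is to transport every set in the statement to a joint spectrum of the restriction $\tau:=\rho\mid_H\colon H\to L(X)$, which is a representation of the \emph{nilpotent} Lie algebra $H$ by compact operators, and then to combine the known non-Cartan descriptions of the joint spectra of such a $\tau$ (the content taken over from [3] and [4]) with the bookkeeping dictated by Definitions 2.2 and 2.4--2.6 and by Theorem 2.7. First I would record that $H_*\subseteq L^2=[L,L]$: for each weight $\alpha_i\neq 0$ choose $h\in H$ with $\alpha_i(h)\neq 0$; since $ad_L(h)-\alpha_i(h)$ is nilpotent on $L^{\alpha_i}$, the operator $ad_L(h)$ is invertible on $L^{\alpha_i}$, whence $L^{\alpha_i}=[h,L^{\alpha_i}]\subseteq [L,L]$, and summing over the nonzero weights gives $H_*\subseteq [L,L]$. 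In particular every $f\in L^*$ with $f(L^2)=0$ automatically vanishes on $H_*$.

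Next I would settle iii) and iv) by the zero representation trick. Since $\dim X=\infty$ we have $K(X)\neq L(X)$, hence $C(X)\neq 0$, and $\tilde{X}=l^{\infty}(X)/m(X)\neq 0$ because $X$ carries a bounded sequence with no norm--convergent subsequence. As each $\rho(l)$ is compact, $\rho(l)^{\infty}$ maps $l^{\infty}(X)$ into $m(X)$, so $\widetilde{\rho(l)}=0$, while $L_{\rho}(l)$ and $R_{\rho}(l)$ act as multiplication by $\rho(l)+K(X)=0$ on $C(X)$; thus $\tilde{\rho}$, $\tilde{L}_{\rho}$, $\tilde{R}_{\rho}$, $\tilde{\tau}$, and their restrictions to $H$, are zero representations on nonzero Banach spaces. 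For the zero representation of a Lie algebra on a nonzero Banach space, a standard computation shows that all its Taylor, S\l odkowski and split joint spectra, in both the ordinary and the Cartan form, equal $\{0\}$. Hence, by Theorem 2.7 iii), $\Sigma_e(\rho)=\Sigma(\tilde{\rho})=\{0\}$ and $\Sigma_{\delta,k,e}(\rho)=\Sigma_{\pi,k,e}(\rho)=\{0\}$, which is iii); by the corresponding identification (from [3], cf. Theorem 2.7 vi)) of the Fredholm split joint spectra of $\rho$ with the split joint spectra of $\tilde{L}_{\rho}$, equivalently $\tilde{R}_{\rho}$, one gets $sp_e(\rho)=sp_{\delta,k,e}(\rho)=sp_{\pi,k,e}(\rho)=\{0\}$, which is iv); and the same observations applied to $\tilde{\tau}$ and to the Calkin multiplication representations of $\tau$ show that all the Fredholm and Fredholm split joint spectra of $\tau$ also equal $\{0\}$.

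For i) and ii) I would unwind the definitions. By Definitions 2.2 and 2.5, each of $\Sigma(\rho)$, $\Sigma_{\delta,k}(\rho)$, $\Sigma_{\pi,k}(\rho)$ is of the form $\{f\in L^*\colon f\mid_H\in\sigma_*(\tau),\ f(H_*)=0\}$, and each of $Sp(\rho)$, $Sp_{\delta,k}(\rho)$, $Sp_{\pi,k}(\rho)$ of the form $\{f\in L^*\colon f\mid_H\in sp_*(\tau),\ f(H_*)=0\}$, with $\sigma_*(\tau)$, $sp_*(\tau)$ the matching ordinary spectra. Since $H$ is nilpotent and $\tau$ is a compact operator representation, the non-Cartan results of [3] and [4] give $\sigma(\tau)=\sigma_{\delta,k}(\tau)=\sigma_{\pi,k}(\tau)=sp(\tau)=sp_{\delta,k}(\tau)=sp_{\pi,k}(\tau)=\{0\}\cup P$, where $P=\{g\in H^*\colon\bigcap_{h\in H}\ker(\tau(h)-g(h))\neq 0\}$; moreover $0$ lies in each of these spectra, being in the corresponding Fredholm spectrum $\{0\}$, which is contained in it. Therefore all six Cartan sets coincide with $\{f\in L^*\colon f\mid_H\in\{0\}\cup P,\ f(H_*)=0\}$, which in particular yields i). For ii), the set displayed on the right is $S=\{0\}\cup\{f\in L^*\colon f(L^2)=0,\ f\mid_H\in P\}$ (the condition there being that there is $x\neq 0$ with $\rho(h)x=f(h)x$ for all $h\in H$, that is, $f\mid_H\in P$). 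Any $f$ in one of the six Cartan sets is a character of $L$ by Remark 2.3 and Theorem 2.8, so $f(L^2)=0$; if $f\mid_H=0$ then $f$ vanishes on $H$ and on $H_*$, so $f=0\in S$, and if $f\mid_H\in P$ then $f\in S$; conversely $0$ lies in each Cartan set, and if $f(L^2)=0$ with $f\mid_H\in P$ then $f(H_*)=0$ because $H_*\subseteq L^2$, so $f$ lies in each Cartan set. This proves ii).

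The step carrying the real weight is the imported identity $\sigma_*(\tau)=sp_*(\tau)=\{0\}\cup P$ for a compact operator representation of a nilpotent Lie algebra on an infinite dimensional Banach space: off $0$ one sits at a Fredholm point (since $\sigma_e(\tau)=\{0\}$), and a Riesz--Fredholm reduction to a finite dimensional generalized weight space, together with the finite dimensional theory and the projection property, identifies the spectrum there with the joint point spectrum and forces the split and ordinary versions to agree. This is precisely what is borrowed from [3] and [4]; the remainder of the argument is the translation through the Cartan definitions and Theorem 2.7.
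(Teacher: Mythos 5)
Your proof is correct and follows essentially the same route as the paper, whose entire argument is the citation ``It is a consequence of Definitions 2.2, 2.4, 2.5 and 2.6, and Theorems 1, 2 and 3 of [4]''; you have simply filled in the bookkeeping (restriction to $\tau=\rho\mid_H$, the zero-representation computation behind iii) and iv), and the translation through Theorem 2.7) that the paper leaves implicit. The one detail genuinely worth recording is your observation that $H_*\subseteq L^2$, which is exactly what reconciles the condition that $f$ vanishes on $H_*$ in the Cartan definitions with the condition $f(L^2)=0$ in the displayed set.
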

\begin{proof} It is a consequence of
Definitions 2.2, 2.4, 2.5 and 2.6, and Theorems 1, 2 and 3 of
[4].\end{proof}

In the conditions of Theorem 3.1,
note that when $X$ is finite dimensional, the spectra to be
considered are the Cartan-Taylor and the Cartan-S\l odkowski joint
spectra. \par 

\begin{thm} Let $X$ be a complex finite
dimensional Banach space, $L$ a complex solvable finite dimensional
Lie algebra, and $\rho\colon L\to L(X)$ a representation of $L$ in
$X$. In addition, suppose that $H$ a Cartan subalgebra of $L$.
Then,\par

\begin{align*} &\Sigma(\rho)=\Sigma_{\delta ,k}(\rho)=\Sigma_{\pi, k}(\rho)=
\{f\in L^*: f(L^2)=0, \hbox{ such that there is }\\
& \hskip1.1cmx\in X,\hbox{ } x\neq 0, \hbox { with the property:
}\rho (h)x=f(h)x,\hbox{  }\forall\hbox{  } h\in H\}.\\\end{align*}

 \end{thm}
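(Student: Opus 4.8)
The plan is to reduce the whole statement, through Definition 2.2, to a computation of the Taylor and S\l odkowski joint spectra of the restriction $\rho\mid_H\colon H\to L(X)$, which is a representation of the \emph{nilpotent} Lie algebra $H$ on the \emph{finite dimensional} space $X$. Writing
$$
W=\{\lambda\in H^*\colon \hbox{there is }x\in X,\ x\neq 0,\hbox{ with }\rho(h)x=\lambda(h)x\hbox{ for all }h\in H\},
$$
the crux is the claim that, under these hypotheses,
$$
\sigma(\rho\mid_H)=\sigma_{\delta,k}(\rho\mid_H)=\sigma_{\pi,k}(\rho\mid_H)=W,\qquad k=0,\ldots,n.
$$
Granting this, Definition 2.2 gives at once $\Sigma(\rho)=\Sigma_{\delta,k}(\rho)=\Sigma_{\pi,k}(\rho)=\{f\in L^*\colon f\mid_{H_*}=0\hbox{ and }f\mid_H\in W\}$, and it only remains to identify this last set with the one in the statement.

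For the claim I would argue by triangularization. Since $H$ is nilpotent, hence solvable, Lie's theorem provides a basis of $X$ in which $\rho\mid_H$ is upper triangular, with diagonal given by characters $\lambda_1,\ldots,\lambda_m\in H^*$; then $\sigma(\rho\mid_H)=\{\lambda_1,\ldots,\lambda_m\}$ is the set of generalized weights of $\rho\mid_H$. Each nonzero generalized weight subspace of $X$ is $\rho\mid_H$-invariant, and Lie's theorem applied once more to the action of $H$ on it produces a genuine common eigenvector, whose weight can only be the corresponding $\lambda_i$; hence $\sigma(\rho\mid_H)=W$. The two extreme S\l odkowski spectra of $\rho\mid_H$ equal $W$ as well: one of $\sigma_{\pi,0}(\rho\mid_H)$, $\sigma_{\delta,0}(\rho\mid_H)$ is, by definition, the set of $\lambda\in H^*$ for which some nonzero $x\in X$ is annihilated by $\rho(h)-\lambda(h)$ for every $h\in H$, that is $W$; the other equals the analogous set for the adjoint representation $(\rho\mid_H)^*$ through the duality $\sigma_{\pi,k}(\tau)=\sigma_{\delta,k}(\tau^*)$ of Theorem 7 of [2], and this is the set of weights of $(\rho\mid_H)^*$, again $W$, the adjoint of an upper triangular representation being lower triangular with the same diagonal characters. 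Finally the monotonicity relations $\sigma_{\pi,0}(\rho\mid_H)\subseteq\sigma_{\pi,k}(\rho\mid_H)\subseteq\sigma_{\pi,n}(\rho\mid_H)=\sigma(\rho\mid_H)$ and the analogous chain for the $\delta$-spectra squeeze every S\l odkowski spectrum of $\rho\mid_H$ between $W$ and $W$, which proves the claim.

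There remains the identification of $A:=\{f\in L^*\colon f\mid_{H_*}=0,\ f\mid_H\in W\}$ with $B:=\{f\in L^*\colon f(L^2)=0,\hbox{ and there is }x\neq 0\hbox{ with }\rho(h)x=f(h)x\hbox{ for all }h\in H\}$. Since $f\mid_H\in W$ means exactly that some nonzero $x\in X$ satisfies $\rho(h)x=f(h)x$ for all $h\in H$, the eigenvector requirement is common to $A$ and $B$, and one only needs that, for such $f$, the conditions $f\mid_{H_*}=0$ and $f(L^2)=0$ are equivalent. If $L^{\alpha_i}$ is one of the weight subspaces of Remark 2.1 with $\alpha_i\neq 0$, choose $h\in H$ with $\alpha_i(h)\neq 0$; then the restriction of $ad_L(h)$ to $L^{\alpha_i}$ is $\alpha_i(h)$ times the identity plus a nilpotent operator, hence invertible, so $L^{\alpha_i}=ad_L(h)(L^{\alpha_i})\subseteq[L,L]=L^2$. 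Summing over the nonzero $\alpha_i$ gives $H_*\subseteq L^2$, whence $f(L^2)=0$ forces $f\mid_{H_*}=0$. Conversely, by Theorems 2.1 and 2.6 of [1] and by Lemma 1 of [8] (both recalled in Remark 2.3), the Cartan-Taylor and the Cartan-S\l odkowski joint spectra are contained in the set of characters of $L$; as $A$ is one of these spectra, every $f\in A$ satisfies $f(L^2)=0$. Hence $A=B$ and the theorem follows.

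The step I expect to carry the real weight is the claim $\sigma(\rho\mid_H)=\sigma_{\delta,k}(\rho\mid_H)=\sigma_{\pi,k}(\rho\mid_H)=W$ --- the collapse of all the S\l odkowski spectra of a nilpotent Lie algebra representation on a finite dimensional space onto its set of weights; everything else is either formal from Definition 2.2 or the elementary inclusion $H_*\subseteq L^2$. This collapse is essentially classical, being a Koszul-complex computation for a triangularizable representation, so if it is already available in the cited literature for the finite dimensional case the argument reduces to invoking it together with Definition 2.2 and Remark 2.3.
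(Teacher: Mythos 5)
Your proposal is correct and follows the same route as the paper, whose entire proof of this theorem reads: ``It is a consequence of Definition 2.2 and Theorem 4 of [4].'' The crux claim you isolate --- that for a finite dimensional space all the Taylor and S\l odkowski joint spectra of the (nilpotent) restriction $\rho\mid_H$ collapse onto the set of joint eigenvalue functionals $W$ --- is precisely the content of Theorem 4 of [4], so the step you flag as ``hopefully available in the cited literature'' is exactly the citation the author makes; your reconstruction of it by Lie triangularization, the duality $\sigma_{\pi,k}(\tau)=\sigma_{\delta,k}(\tau^*)$ from Theorem 7 of [2], and the monotonicity squeeze $\sigma_{*,0}\subseteq\sigma_{*,k}\subseteq\sigma_{*,n}=\sigma$ is sound, the only glossed point being that the identification of $\sigma_{\pi,0}$ (or $\sigma_{\delta,0}$) with the joint eigenspace condition is a computation of the top homology of the Chevalley--Eilenberg complex, which gives exactly $W$ because $H$ is nilpotent (the trace of $ad_H$ vanishes). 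Your final bookkeeping --- $H_*=\sum_{\alpha_i\neq 0}L^{\alpha_i}\subseteq L^2$ via invertibility of $ad_L(h)$ on $L^{\alpha_i}$, and the reverse inclusion via the fact that the Cartan spectra consist of characters (Lemma 1 of [8], Theorem 2.1 of [1], as recalled in Remark 2.3) --- is correct and makes explicit a translation between ``$f$ vanishes on $H_*$'' and ``$f(L^2)=0$'' that the paper leaves entirely implicit.
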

\begin{proof} It is a consequence of
Definition 2.2 and Theorem 4 of [4].\end{proof}

\indent Next the spectral contributions of two representations of
complex solvable finite dimensional Lie algebras in Banach spaces to
the Cartan-Taylor, the Cartan-S\l odkowski, the Cartan-Fredholm, the
Cartan-split and the Cartan-Fredholm split joint spectra of the
tensor product and the multiplication representations of the direct
sum of the algebras will be described. The results to be presented
consist in an extension to these spectra of the ones obtained in
[3].\par

\indent In what follows the axiomatic tensor product introduced in
[10] will be considered. As regard its definition as well as the
definition of the class of ideals between Banach spaces to be used,
see [10] and [3]. Furthermore, in order to review the definition of
the tensor product and the multiplication representation, see [10],
[17] and [3]. Recall that if $X$ is a Banach space, then $X{'}$ will
denote the dual space of $X$.\par

\begin{thm}  Let $X_1$ and $X_2$ be two complex Banach
spaces, $L_1$ and $L_2$ two complex solvable finite dimensional Lie
algebras, and $\rho_i \colon L_i\to  L(X_i)$, $i=1$, $2$,
two representations of the Lie algebras. Suppose that
$X_1\tilde{\otimes} X_2$ is a tensor product of $X_1$ and $X_2$
relative to $ \langle X_1,X_1{'}\rangle$ and $\langle
X_2,X_2{'}\rangle$, and consider the tensor product representation
of $L=L_1\times L_2$, $\rho\colon L\to L(X_1\tilde{\otimes} X_2)$.
Then,

\begin{align*} i) \hbox{}&\bigcup_{p+q=k}\Sigma_{\delta ,p}(\rho_1)\times
\Sigma_{\delta ,q}(\rho_2)\subseteq\Sigma_{\delta ,k}(\rho)
\subseteq Sp_{\delta ,k } (\rho)\subseteq \\
&\bigcup_{p+q=k}Sp_{\delta ,p} (\rho_1)\times Sp_{\delta,q}(\rho_2),\\\end{align*}

\begin{align*} ii)\hbox{}&\bigcup_{p+q=k}\Sigma_{\pi ,p}(\rho_1)\times
\Sigma_{\pi ,q}(\rho_2)\subseteq\Sigma_{\pi ,k}(\rho)
\subseteq Sp_{\pi ,k} (\rho)\subseteq\\
&\bigcup_{p+q=k}Sp_{\pi ,p} (\rho_1)\times Sp_{\pi
,q}(\rho_2).\\\end{align*}
\indent In particular, if $X_1$ and $X_2$ are Hilbert spaces, the
above inclusions are equalities.
\end{thm}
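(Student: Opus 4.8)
The plan is to transport the tensor product inclusions already established in [3] for the ordinary S\l odkowski and split joint spectra through the Cartan construction of Definitions 2.2 and 2.5.

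First fix Cartan subalgebras $H_1\subseteq L_1$ and $H_2\subseteq L_2$. By Remark 2.1, $H=H_1\oplus H_2$ is a Cartan subalgebra of $L=L_1\times L_2$ whose Cartan decomposition is $L=H\oplus H_*$ with $H_*=H_{1*}\oplus H_{2*}$. Since none of the spectra occurring in the statement depends on the chosen Cartan subalgebra (Theorem 2.8 (i)), we may evaluate $\Sigma_{\delta,k}(\rho)$, $Sp_{\delta,k}(\rho)$, $\Sigma_{\delta,p}(\rho_i)$ and $Sp_{\delta,p}(\rho_i)$ with respect to $H$, $H_1$ and $H_2$ respectively. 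Now one checks directly from the construction of the tensor product representation that the restriction $\rho\mid_H\colon H\to L(X_1\tilde{\otimes}X_2)$ is given by $(h_1,h_2)\mapsto\rho_1(h_1)\tilde{\otimes}1+1\tilde{\otimes}\rho_2(h_2)$; that is, $\rho\mid_H$ is precisely the tensor product representation of $\rho_1\mid_{H_1}$ and $\rho_2\mid_{H_2}$ relative to $\langle X_1,X_1{'}\rangle$ and $\langle X_2,X_2{'}\rangle$ (the hypothesis on the spaces and their dualities does not involve the algebras, so it is inherited by the restricted data), and $H_1$ and $H_2$ are nilpotent.

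Apply the corresponding results of [3] to $\rho_1\mid_{H_1}$ and $\rho_2\mid_{H_2}$: for every $k$,
\begin{align*}
\bigcup_{p+q=k}\sigma_{\delta ,p}(\rho_1\mid_{H_1})\times\sigma_{\delta ,q}(\rho_2\mid_{H_2})&\subseteq\sigma_{\delta ,k}(\rho\mid_H)\subseteq sp_{\delta ,k}(\rho\mid_H)\\
&\subseteq\bigcup_{p+q=k}sp_{\delta ,p}(\rho_1\mid_{H_1})\times sp_{\delta ,q}(\rho_2\mid_{H_2}),
\end{align*}
and likewise with $\delta$ replaced by $\pi$. It remains to read off part (i). Identifying $L^*$ with $L_1^*\times L_2^*$, a functional $f=(f_1,f_2)$ vanishes on $H_*=H_{1*}\oplus H_{2*}$ if and only if $f_1$ vanishes on $H_{1*}$ and $f_2$ vanishes on $H_{2*}$, and then $f\mid_H=(f_1\mid_{H_1},f_2\mid_{H_2})$. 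Hence, by Definition 2.2, $f\in\Sigma_{\delta ,k}(\rho)$ if and only if $f_i\mid_{H_{i*}}=0$ for $i=1,2$ and $(f_1\mid_{H_1},f_2\mid_{H_2})\in\sigma_{\delta ,k}(\rho\mid_H)$; by Definition 2.5, $f\in Sp_{\delta ,k}(\rho)$ if and only if $f_i\mid_{H_{i*}}=0$ and $(f_1\mid_{H_1},f_2\mid_{H_2})\in sp_{\delta ,k}(\rho\mid_H)$; and $f_i\in\Sigma_{\delta ,p}(\rho_i)$ (resp.\ $f_i\in Sp_{\delta ,p}(\rho_i)$) if and only if $f_i\mid_{H_{i*}}=0$ and $f_i\mid_{H_i}\in\sigma_{\delta ,p}(\rho_i\mid_{H_i})$ (resp.\ $\in sp_{\delta ,p}(\rho_i\mid_{H_i})$). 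Substituting the displayed chain into these descriptions produces exactly the chain of inclusions asserted in (i); its middle inclusion is in any case contained in Remark 2.9. Part (ii) is obtained in the same fashion from the $\pi$-versions of these results of [3]. Finally, if $X_1$ and $X_2$ are Hilbert spaces then, by Remark 2.9, $\Sigma_{\delta ,p}(\rho_i)=Sp_{\delta ,p}(\rho_i)$ and $\Sigma_{\pi ,p}(\rho_i)=Sp_{\pi ,p}(\rho_i)$, so the two outer members of each chain coincide and hence all four members are equal.

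The step requiring the most care is the identification carried out in the second paragraph: namely that restricting the tensor product representation of $L_1\times L_2$ to the Cartan subalgebra $H_1\oplus H_2$ reproduces the tensor product representation of $\rho_1\mid_{H_1}$ and $\rho_2\mid_{H_2}$, together with the check that the theorems of [3] apply verbatim to this restricted data, the index ranges being compatible because $\dim L=\dim L_1+\dim L_2$. Once that reduction is in place, the rest is the purely formal translation through Definitions 2.2 and 2.5 indicated above.
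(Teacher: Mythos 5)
Your proposal is correct and follows essentially the same route as the paper, whose proof is precisely the citation of Definitions 2.2 and 2.5, Theorem 14 of [3], and Remark 2.1; you have simply made explicit the reduction to the Cartan subalgebra $H=H_1\oplus H_2$ and the translation of the inclusions of [3] through the definitions. The only cosmetic point is that the relevant index bound for the restricted data is $\dim H=\dim H_1+\dim H_2$ rather than $\dim L$, which the standard stabilization conventions for the S\l odkowski spectra handle.
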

\begin{proof}It is a consequence of
Definitions 2.2 and  2.5, Theorem 14 of [3] and Remark 2.1.
\end{proof}

\begin{thm} In the same conditions of Theorem 3.3,

\begin{align*} i) \hbox{}&\bigcup_{p+q=k}\Sigma_{\delta ,p,e}(\rho_1)\times
\Sigma_{\delta ,q}(\rho_2 )\bigcup\bigcup_{p+q=k}\Sigma_{\delta
,p}(\rho_1) \times\Sigma_{\delta ,q ,e}(\rho_2)
\subseteq\sigma_{\delta ,k ,e}(\rho)
\subseteq \\
&Sp_{\delta ,k,e} (\rho)\subseteq \bigcup_{p+q=k}Sp_{\delta ,p,e}
(\rho_1)\times Sp_{\delta ,q}(\rho_2)\bigcup\bigcup_{p+q=k}
Sp_{\delta ,p}(\rho_1) \times Sp_{\delta ,q,e}(\rho_2),\\\end{align*}

\begin{align*} &ii) \hbox{}\bigcup_{p+q=k}\Sigma_{\pi ,p,e}(\rho_1)\times
\Sigma_{\pi ,q}(\rho_2 )\bigcup\bigcup_{p+q=k}\Sigma_{\pi
,p}(\rho_1) \times\Sigma_{\pi ,q ,e}(\rho_2)
\subseteq\Sigma_{\pi ,k ,e}(\rho)\subseteq \\
&Sp_{\pi ,k,e} (\rho)\subseteq \bigcup_{p+q=k}Sp_{\pi ,p,e}
(\rho_1)\times Sp_{\pi ,q}(\rho_2)\bigcup\bigcup_{p+q=k}Sp_{\pi ,p}
(\rho_1) \times Sp_{\pi ,q,e}(\rho_2).\\\end{align*}

In particular, if $X_1$ and $X_2$ are Hilbert spaces, the above
inclusions are equalities.\par
\end{thm}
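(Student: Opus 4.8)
The plan is to imitate the proof of Theorem 3.3, reducing everything to the corresponding description of the ordinary Fredholm and Fredholm split joint spectra of a tensor product representation proved in [3], by passing to a suitable Cartan subalgebra. First I would fix Cartan subalgebras $H_1$ and $H_2$ of $L_1$ and $L_2$. By Remark 2.1, $H=H_1\oplus H_2$ is then a Cartan subalgebra of $L=L_1\times L_2$, with Cartan decomposition $L=H\oplus H_*$, where $H_*=H_{1*}\oplus H_{2*}$ and $L_i=H_i\oplus H_{i*}$, $i=1,2$. The key observation is that the restriction $\rho\mid_H$ of the tensor product representation $\rho$ is precisely the tensor product representation of $H=H_1\times H_2$ associated with $\rho_1\mid_{H_1}$ and $\rho_2\mid_{H_2}$ on the same space $X_1\tilde{\otimes}X_2$, relative to the same dualities $\langle X_1,X_1{'}\rangle$ and $\langle X_2,X_2{'}\rangle$; this is immediate from the definition of the tensor product representation, since $\rho(h_1,h_2)$ acts as the sum of an operator coming from the first factor and one coming from the second.

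Next, by Definitions 2.4, 2.5 and 2.6, an element $f\in L^*=L_1^*\times L_2^*$ belongs to $\Sigma_{\delta,k,e}(\rho)$, $Sp_{\delta,k,e}(\rho)$, or the corresponding $\pi$-versions, exactly when $f$ vanishes on $H_*=H_{1*}\oplus H_{2*}$ and $f\mid_H$ lies in the relevant ordinary joint spectrum of $\rho\mid_H$. Writing $f=(f_1,f_2)$, vanishing on $H_*$ is equivalent to $f_1\mid_{H_{1*}}=0$ and $f_2\mid_{H_{2*}}=0$, while $f\mid_H=(f_1\mid_{H_1},f_2\mid_{H_2})$. Applying to $\rho\mid_H$ the analogue of Theorem 14 of [3] for the Fredholm and Fredholm split joint spectra (the result of [3] describing these spectra of a tensor product representation), one gets the full chain of inclusions in terms of the ordinary $\sigma_{\delta,k,e}$, $\sigma_{\delta,k}$, $sp_{\delta,k,e}$, $sp_{\delta,k}$ spectra of $\rho_i\mid_{H_i}$ (and the $\pi$-analogues). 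Translating each term back through Definitions 2.4, 2.5 and 2.6 — using once more that $L_i^*$ splits and that $H_{i*}$ sits inside $L_i$ — converts those spectra into the Cartan spectra $\Sigma_{\delta,p,e}(\rho_i)$, $\Sigma_{\delta,q}(\rho_i)$, $Sp_{\delta,p,e}(\rho_i)$, $Sp_{\delta,q}(\rho_i)$, and so yields i) and ii). The two middle inclusions $\Sigma_{\delta,k,e}(\rho)\subseteq Sp_{\delta,k,e}(\rho)$ and $\Sigma_{\pi,k,e}(\rho)\subseteq Sp_{\pi,k,e}(\rho)$ are exactly the ones recorded in Remark 2.8.

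For the Hilbert space case, Remark 2.8 gives that on a Hilbert space the Cartan-Fredholm and Cartan-Fredholm split joint spectra coincide (equivalently, the ordinary Fredholm and Fredholm split spectra of $\rho\mid_H$ coincide). Hence, when $X_1$, $X_2$ are Hilbert spaces and $X_1\tilde{\otimes}X_2$ is taken Hilbertian, the outer unions in i) and ii) agree term by term and the middle terms agree as well, so the whole chain collapses to equalities.

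The only point that requires genuine care is the bookkeeping between the direct-sum decomposition $L^*=L_1^*\times L_2^*$ and the product structure $H=H_1\times H_2$: one must check that ``vanishing on $H_*$'' is compatible with the product arising from the tensor product formula, and that $X_1\tilde{\otimes}X_2$ remains a legitimate tensor product with respect to the relevant dualities when regarded as carrying a representation of $H$ rather than of $L$. Both are routine once $\rho\mid_H$ has been identified as a tensor product representation, and the rest is a transcription of the argument of [3] through Remark 2.1.
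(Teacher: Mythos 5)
Your argument is essentially the paper's own proof: the author likewise deduces the result from Definitions 2.4 and 2.6, the tensor product theorem for the ordinary Fredholm and Fredholm split joint spectra (Theorem 17 of [3], which is exactly the ``analogue of Theorem 14'' you invoke), and the Cartan decomposition of $L=L_1\times L_2$ recorded in Remark 2.1. Your reduction of $\rho\mid_H$ to the tensor product representation of $H_1\times H_2$ and the subsequent bookkeeping are correct; only note that the remark supplying the middle inclusions and the Hilbert space equalities is numbered 2.9, not 2.8.
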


\begin{proof} It is a consequence of
Definitions 2.4 and 2.6, Theorem 17 of [3], and Remark 2.1
\end{proof}

\begin{thm}  Let $X_1$ and $X_2$ be two complex Banach
spaces, $L_1$ and $L_2$ two complex solvable finite dimensional Lie
algebras, and $\rho_i \colon L_i\to  L(X_i)$, $i=1$, $2$,
two representations of the Lie algebras. Suppose that $J$ is an
operator ideal between $X_2$ and $X_1$, and present it as the tensor
product of $X_1$ and $X_2{'}$ relative to $\langle
X_1,X_1{'}\rangle$ and $\langle X_2{'},X_2\rangle$. In addition,
consider the multiplication representation of $L=L_1\times
L_2^{op}$, $\tilde{\rho}\colon L\to L(J)$. Then, if dim $L_2$ = $m$,

\begin{align*} i) &\hbox{}\bigcup_{p+q=k}\Sigma_{\delta , p}(\rho_1)\times
\Sigma_{\pi , m-q}(\rho_2)\subseteq\Sigma_{\delta  ,k}
(\tilde{\rho})
\subseteq Sp_{\delta , k} (\tilde{\rho})\subseteq\\
&\bigcup_{p+q=k}Sp_{\delta , p} (\rho_1)\times Sp_{\pi ,
m-q}(\rho_2),\\\end{align*}

\begin{align*} ii)&\hbox{}\bigcup_{p+q=k}\Sigma_{\pi ,p}(\rho_1)\times
\Sigma_{\delta , m-q}(\rho_2)\subseteq \Sigma_{\pi ,k}(\tilde{\rho})
\subseteq
 Sp_{\pi , k} (\tilde{\rho})\subseteq\\
&\bigcup_{p+q=k}Sp_{\pi , p} (\rho_1)\times Sp_{\delta
,m-q}(\rho_2).\\\end{align*}

\indent In particular, if $X_1$ and $X_2$ are Hilbert spaces, the
above inclusions are equalities.\par
\end{thm}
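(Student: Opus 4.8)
The plan is to follow the pattern already used for Theorems 3.3 and 3.4: reduce the statement, via a Cartan subalgebra, to the theorem of [3] describing the S\l odkowski and split joint spectra of a multiplication representation (the analogue, for the multiplication representation, of Theorem 14 of [3]). First I would fix Cartan subalgebras $H_1\subseteq L_1$ and $H_2\subseteq L_2$. Passing from a bracket to its opposite preserves nilpotency and normalizers and merely replaces every weight of the adjoint action of $H_2$ on $L_2$ by its negative, so $H_2$ is also a Cartan subalgebra of $L_2^{op}$, with the same complement $H_{2*}$; hence, by Remark 2.1, $H=H_1\oplus H_2$ is a Cartan subalgebra of $L=L_1\times L_2^{op}$ and $H_*=H_{1*}\oplus H_{2*}$. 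The first routine fact to record is that forming a multiplication representation is compatible with restriction, so that $\tilde\rho\mid_H$ is exactly the multiplication representation on $J$ attached to $\rho_1\mid_{H_1}$ and $\rho_2\mid_{H_2}$ for the algebra $H_1\times H_2^{op}$.

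Next I would apply the quoted theorem of [3] to the pair $\rho_1\mid_{H_1}$, $\rho_2\mid_{H_2}$, which sandwiches $\sigma_{\delta,k}(\tilde\rho\mid_H)$ and $sp_{\delta,k}(\tilde\rho\mid_H)$ between unions over $p+q=k$ of products of S\l odkowski, respectively split, joint spectra of $\rho_1\mid_{H_1}$ and $\rho_2\mid_{H_2}$, with the interchange of $\pi$ and $\delta$ in the second factor coming from the opposite algebra, together with the analogous chain for $\sigma_{\pi,k}$. Writing $L^*=L_1^*\times L_2^*$ and $f=(f_1,f_2)$, the condition that $f$ vanish on $H_*=H_{1*}\oplus H_{2*}$ is that $f_1$ vanish on $H_{1*}$ and $f_2$ on $H_{2*}$; intersecting these chains with the functionals vanishing on $H_*$ and invoking Definitions 2.2 and 2.5 converts $\sigma_{\delta,k}(\tilde\rho\mid_H)$ into $\Sigma_{\delta,k}(\tilde\rho)$, converts $sp_{\delta,k}(\tilde\rho\mid_H)$ into $Sp_{\delta,k}(\tilde\rho)$, and converts each product union of spectra of the restrictions into the corresponding union of Cartan spectra of $\rho_1$ and $\rho_2$. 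The last thing to settle is the exact value of the homological parameter in the second factor: the restriction to $H_2$ produces an index built from $\dim H_2$, whereas the statement is phrased with $m=\dim L_2\ge\dim H_2$; since the S\l odkowski and split joint spectra are nested in their integer parameter and become stationary once the parameter reaches the dimension of the algebra, the difference $\dim L_2-\dim H_2$ can be absorbed into the first factor on the inner (lower) side and is harmless on the outer (upper) side, which yields (i). Part (ii) follows in the same way from the $\pi$-part of the theorem of [3]. Finally, when $X_1$ and $X_2$ are Hilbert spaces, Remark 2.9 gives $\Sigma_*(\rho_i)=Sp_*(\rho_i)$, so in each chain the leftmost and rightmost sets coincide and all the intermediate sets must then be equal.

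The step I expect to be the main obstacle is precisely this bookkeeping of the homological index: matching the parameter appearing in the statement, which refers to $\dim L_2$, with the one produced by restricting the multiplication representation to a Cartan subalgebra of $L$, which refers to $\dim H_2$, and checking that the gap is innocuous at both ends of the sandwich. The remaining points -- that a Cartan subalgebra of $L_2$ is a Cartan subalgebra of $L_2^{op}$ with the same $H_{2*}$, and that the multiplication-representation construction commutes with restriction to $H$ -- are straightforward but still have to be stated.
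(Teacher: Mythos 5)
Your proposal is correct and follows essentially the same route as the paper, whose proof is simply the citation ``Definitions 2.2 and 2.5, Theorem 19 of [3], [17, 0.5.8], and Remark 2.1'': you restrict to the Cartan subalgebra $H=H_1\oplus H_2$ of $L_1\times L_2^{op}$ supplied by Remark 2.1 (noting $H_2$ is Cartan in $L_2^{op}$ with the same $H_{2*}$), apply the multiplication-representation theorem of [3] to $\rho_1\mid_{H_1}$ and $\rho_2\mid_{H_2}$, and translate via the definitions of the Cartan spectra. The index bookkeeping between $\dim L_2$ and $\dim H_2$ that you single out is exactly the role played by the citation of [17, 0.5.8] in the paper, and your resolution via monotonicity and stabilization of the spectra in the integer parameter is sound.
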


\begin{proof}It is a consequence of
Definitions 2.2 and 2.5, Theorem 19 of [3], [17, 0.5.8], and Remark
2.1.\end{proof}

\begin{thm}  In the same conditions of Theorem 3.5,

\begin{align*} i)&\hbox{}\bigcup_{p+q=k}\Sigma_{\delta , p,e}(\rho_1)\times
\Sigma_{\pi ,m-q}(\rho_2)\bigcup\bigcup_{p+q=k}\Sigma_{\delta , p}
(\rho_1)\times \Sigma_{\pi , m-q ,e}(\rho_2)
\subseteq\\
&\Sigma_{\delta ,k ,e}(\tilde{\rho})
\subseteq Sp_{\delta , k,e} (\tilde{\rho})\subseteq \\
&\bigcup_{p+q=k}Sp_{\delta ,p ,e} (\rho_1)\times Sp_{\pi ,
m-q}(\rho_2) \bigcup\bigcup_{p+q=k} Sp_{\delta , p}(\rho_1)\times
Sp_{\pi , m-q ,e}(\rho_2),\\\end{align*}

\begin{align*} ii) &\hbox{}\bigcup_{p+q=k}\Sigma_{\pi , p, e}(\rho_1)\times
\Sigma_{\delta ,m-q}(\rho_2)\bigcup\bigcup_{p+q=k} \Sigma_{\pi
,p}(\rho_1)\times \Sigma_{\delta ,m-q ,e}(\rho_2)
\subseteq\\
&\Sigma_{\pi , k ,e}(\tilde{\rho})
\subseteq Sp_{\delta , k ,e} (\tilde{\rho})\subseteq\\
& \bigcup_{p+q=k}Sp_{\pi ,p ,e} (\rho_1)\times Sp_{\delta ,
m-q}(\rho_2)\bigcup \bigcup_{p+q=k}Sp_{\pi , p}(\rho_1)\times
sp_{\delta ,m-q ,e}(\rho_2) .\\\end{align*}

\indent In particular, if $X_1$ and $X_2$ are Hilbert spaces, the
above inclusions are equalities.\par
\end{thm}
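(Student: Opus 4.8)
The plan is to argue exactly as for Theorems 3.3--3.5: pass from the Cartan spectra to the ordinary ones of a multiplication representation of a nilpotent Lie algebra, apply the Fredholm split version of the multiplication-representation theorem of [3] (the analogue of Theorem 19 of [3], presumably Theorem 22 of [3]), and translate the outcome back by means of Definitions 2.2, 2.4 and 2.6. First I would fix Cartan subalgebras $H_1\subseteq L_1$ and $H_2\subseteq L_2$ and observe that, since $L_2^{op}$ has the same underlying vector space as $L_2$ and $ad_{L_2^{op}}(h)=-ad_{L_2}(h)$, the generalized weight space decomposition of $L_2^{op}$ relative to $H_2$ is obtained from that of $L_2$ by replacing each weight $\alpha_i$ with $-\alpha_i$; in particular the zero weight space is unchanged, so $H_2$ is also a Cartan subalgebra of $L_2^{op}$, with the same complement $H_{2*}$, and $H_2^{op}$ is again nilpotent. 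By Remark 2.1, $H=H_1\oplus H_2$ is then a Cartan subalgebra of $L=L_1\times L_2^{op}$, and $L=H\oplus H_*$ with $H_*=H_{1*}\oplus H_{2*}$.

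Next I would note that the restriction $\tilde\rho\mid_H$ is precisely the multiplication representation of the nilpotent algebra $H_1\times H_2^{op}$ on $J$ determined by $\rho_1\mid_{H_1}$ and $\rho_2\mid_{H_2}$, the ideal $J$ being presented as a tensor product of $X_1$ and $X_2{'}$ relative to the dualities inherited by restriction. Applying Theorem 22 of [3] to $\tilde\rho\mid_H$ yields, for $p+q=k$, the chain of inclusions whose inner and outer terms are built out of $\sigma_{\delta,p,e}(\rho_1\mid_{H_1})$, $\sigma_{\pi,m-q}(\rho_2\mid_{H_2})$, $sp_{\delta,p,e}(\rho_1\mid_{H_1})$, $sp_{\pi,m-q}(\rho_2\mid_{H_2})$, together with the companion terms in which the Fredholm label is carried by the second factor. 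The shift $q\mapsto m-q$ and the interchange of the $\delta$- and $\pi$-spectra of $\rho_2\mid_{H_2}$ arise from the passage to the opposite algebra $L_2^{op}$, which dualizes the corresponding Koszul-type complexes, and are justified by [17, 0.5.8].

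Then I would transport this back to $L^*$. By Definitions 2.2, 2.4 and 2.6, a functional $f\in L^*$ belongs to $\Sigma_{\delta,k,e}(\tilde\rho)$ (resp.\ $Sp_{\delta,k,e}(\tilde\rho)$) iff it vanishes on $H_*$ and $f\mid_H$ lies in $\sigma_{\delta,k,e}(\tilde\rho\mid_H)$ (resp.\ $sp_{\delta,k,e}(\tilde\rho\mid_H)$); and, again by those definitions, a functional on $H=H_1\oplus H_2$ vanishing on $H_*=H_{1*}\oplus H_{2*}$ is nothing but a pair $(g_1,g_2)$ with $g_i$ vanishing on $H_{i*}$, i.e.\ a point of the relevant product of Cartan spectra of $\rho_1$ and $\rho_2$. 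Feeding the inclusions of the previous step through this identification gives the leftmost and rightmost inclusions of i) and ii), while the middle inclusions $\Sigma_{\delta,k,e}(\tilde\rho)\subseteq Sp_{\delta,k,e}(\tilde\rho)$ and $\Sigma_{\pi,k,e}(\tilde\rho)\subseteq Sp_{\pi,k,e}(\tilde\rho)$ are the general ones recorded in Remark 2.9. When $X_1$ and $X_2$ are Hilbert spaces, Remark 2.9 applied to $\rho_1$ and $\rho_2$ makes the leftmost and rightmost unions of each chain coincide, so all four terms are equal.

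The step I expect to be the main obstacle is the bookkeeping forced by the opposite algebra $L_2^{op}$: one must check simultaneously that the Cartan decomposition of $L$ splits as a product, that restriction to $H$ commutes with forming the multiplication representation, and that the reflection $q\mapsto m-q$ together with the $\delta\leftrightarrow\pi$ interchange supplied by [17, 0.5.8] match exactly the combinatorics appearing in the statement; once these are aligned, the proof is a routine transcription of the argument for Theorem 3.5 with the essential versions of the ingredients in place of the plain ones.
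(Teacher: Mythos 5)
Your argument follows the paper's proof exactly: the paper derives the theorem as a consequence of Definitions 2.4 and 2.6, the Fredholm split multiplication-representation theorem of [3] (Theorem 20 there, not Theorem 22 as you guessed), [17, 0.5.8] for the opposite-algebra bookkeeping, and Remark 2.1 for the Cartan decomposition of the direct sum. Your expanded write-up of the reduction to the Cartan subalgebras and the translation back to $L^*$ is a faithful elaboration of that same route.
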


\begin{proof}
\indent It is a consequence of Definition 2.4 and 2.6, Theorem 20 of
[3], [17, 0.5.8], and Remark 2.1.\end{proof}

\indent A final remark. It is worth noticing that as in the previous
results, the analogues of Theorems 23 and 24 of [3] can be also
proved for the joint spectra of Definitions 2.2 and 2.4 - 2.6.\par

\vskip.5cm
\noindent Enrico Boasso\par
\noindent E-mail address: enrico\_odisseo@yahoo.it

\end{document}